\tikzstyle{vertex}=[circle, draw, fill, inner sep=0pt, minimum width=4pt]
\newcommand*\samethanks[1][\value{footnote}]{\footnotemark[#1]}
\newtheorem{theorem}{Theorem}[section]
\newtheorem{lemma}[theorem]{Lemma} 
\newtheorem{corollary}[theorem]{Corollary}
\newtheorem{problem}[theorem]{Problem}
\theoremstyle{definition}
\numberwithin{equation}{section}
\newcommand{\ceil}[1]{\left\lceil #1 \right\rceil}
\newcommand{\expec}[1]{\mathbb{E} \left[ #1 \right]}
\DeclareMathOperator{\Bin}{Bin}
\DeclareMathOperator{\Poi}{Poisson}
\newcommand{\bfa}{\mathbf{a}}
\newcommand{\bfb}{\mathbf{b}}
\title{Decomposing random permutations into order-isomorphic subpermutations}
\author{Carla Groenland\thanks{Utrecht University, Utrecht, The Netherlands. Partially supported by CRACKNP with funding from the European Research Council under the EU Horizon 2020 research and innovation programme (grant agreement no. 853234).}\and
Tom Johnston\thanks{School of Mathematics, University of Bristol, Bristol, BS8 1UG, UK and Heilbronn Institute for Mathematical Research, Bristol, UK.}\and 
D\'aniel Kor\'andi\thanks{Mathematical Institute, University of Oxford, Oxford, OX2 6GG, UK.} \thanks{Supported by SNSF Postdoc.Mobility Fellowship P400P2\_186686}\and
Alexander Roberts\samethanks[3]\and
Alex Scott\samethanks[3] \thanks{Supported by EPSRC grant EP/V007327/1.}\and
Jane Tan\samethanks[3]}
\date{}
\begin{document}

\maketitle
\begin{abstract}
	Two permutations $\sigma$ and $\pi$ are $\ell$-similar if they can be decomposed into subpermutations $\sigma^{(1)}, \ldots, \sigma^{(\ell)}$ and $\pi^{(1)}, \ldots, \pi^{(\ell)}$ such that $\sigma^{(i)}$ is order-isomorphic to $\pi^{(i)}$ for all $i \in [\ell]$. Recently, Dudek, Grytczuk and Ruci{\'n}ski posed the problem of determining the minimum $\ell$ for which two permutations chosen independently and uniformly at random are $\ell$-similar. We show that two such permutations are $O\big(n^{1/3}\log^{11/6}(n)\big)$-similar with high probability, which is tight up to a polylogarithmic factor. Our result also generalises to simultaneous decompositions of multiple permutations.
\end{abstract}

\section{Introduction}

Given a sequence $\bfa = a_1, \dots, a_{n}$ of $n$ distinct real numbers, one can uniquely associate a permutation $\sigma \in S_n$ (which we write as a sequence $\sigma(1),\sigma(2),\ldots,\sigma(n)$ with elements in $[n]$) to $\bfa$ by replacing the $i$th smallest element of $\bfa$ with $i$. We call this $\sigma$ the \emph{pattern} of $\bfa$. Two sequences $\bfa = a_1, \dots, a_{n}$ and $\bfb = b_1, \dots, b_{n}$ of distinct real numbers are \emph{order-isomorphic} if they have the same pattern, and in this case we write $\bfa \sim \bfb$.

In this paper, we are concerned with the problem of decomposing two random permutations into order-isomorphic subpermutations and, in particular, how many parts are necessary. Formally, we say two permutations $\sigma$ and $\pi$ are \emph{$\ell$-similar} if the sequences $\sigma(1), \dots, \sigma(n)$ and $\pi(1), \dots, \pi(n)$ can be partitioned into subsequences $\sigma^{(1)}, \dots, \sigma^{(\ell)}$ and $\pi^{(1)}, \dots, \pi^{(\ell)}$ such that $\sigma^{(i)} \sim \pi^{(i)}$ for each $i\in[\ell]$, and we write $U(\sigma, \pi)$ for the smallest $\ell$ for which $\sigma$ and $\pi$ are $\ell$-similar. For example, if we take the permutations $\sigma$ and $\pi$ to be $1, 4, 3, 5, 2$ and $2, 5, 3, 1, 4$ respectively, then $\sigma$ and $\pi$ are $2$-similar. Indeed, we may decompose $\sigma$ into $\sigma^{(1)} = 1, 3, 2$ and $\sigma^{(2)} = 4,5$ and decompose $\pi$ into $\pi^{(1)} = 2, 5, 3$ and $\pi^{(2)} = 1, 4$. It is easy to check that $\sigma^{(1)} \sim \pi^{(1)}$ and $\sigma^{(2)} \sim \pi^{(2)}$, and so $\sigma$ and $\pi$ are $2$-similar. Since $\sigma$ and $\pi$ do not have the same pattern, they are not $1$-similar, and $U(\sigma, \pi) = 2$.

Similar decomposition problems have been studied in other contexts. For example, Ulam posed the problem of determining the minimum $\ell(n)$ such that any two graphs $G$ and $H$ on $n$ vertices can be decomposed into $\ell$ edge-disjoint subgraphs $G_1, \dots, G_\ell$ and $H_1, \dots, H_\ell$ such that $G_i \sim H_i$ for all $i \in [\ell]$. This led to a series of papers which culminated in the result that all graphs on $n$ vertices with the same number of edges may be simultaneously decomposed into $3n/4 + O(1)$ parts~\cite{chung1979minimal,chung1981minimal,chung1984minimal}.

Unfortunately, such a result cannot possibly hold for permutations; in the worst case two permutations might only be $n$-similar. Indeed, the identity permutation $1, \dots, n$ and the reverse permutation $n, \dots, 1$ contain no non-trivial order-isomorphic subpermutations, and so cannot be $(n-1)$-similar. However, it is far from obvious what happens on average.

\begin{problem}[Dudek, Grytczuk and Ruci{\'n}ski \cite{dudek2021variations}]\label{prob:decomp}
Let $\sigma$ and $\pi$ be two permutations of length $n$ chosen independently and uniformly at random. What is the expected value of $U(\sigma, \pi)$?
\end{problem}

A simple $O(\sqrt{n})$ upper bound can be proved by decomposing the permutations into increasing subpermutations as follows. The length of a longest increasing sequence in a uniformly random permutation is asymptotically $2 \sqrt{n}$ (see \cite{steele1995variations}), and this is well-concentrated \cite{bollobas1992height, frieze1991length}. In particular, if $L$ is the length of a longest increasing subpermutation in a uniformly random permutation, then
\[\mathbb{P}\left(\left|L - \expec{L}\right| \geq n^{\frac{1}{3}}\right) \leq \exp(-n^{\beta})\]
for some $\beta > 0$. Hence, rather crudely, we have $L \leq 3 \sqrt{n}$ with very high probability. By Dilworth's Theorem, this implies that a uniformly random permutation can be decomposed into at most $3 \sqrt{n}$ decreasing subpermutations with very high probability. It follows easily that, with very high probability, a pair of uniformly random permutations $\sigma$ and $\pi$ can be decomposed into $\ell \leq 6 \sqrt{n}$ decreasing subsequences $\sigma^{(1)}, \dots, \sigma^{(\ell)}$ and $\pi^{(1)}, \dots, \pi^{(\ell)}$ such that $\sigma^{(i)}$ and $\pi^{(i)}$ have the same length for all $i \in [\ell]$.

Our main result improves the preceding simple bound to the following.
\begin{theorem}
	\label{thm:joint-twins}
	Let $\sigma$ and $\pi$ be two permutations of length $n$ chosen independently and uniformly at random. Then $\sigma$ and $\pi$ are $O\big(n^{1/3} \log^{11/6}(n)\big)$-similar with probability at least $1 - o(n^{-1})$.
\end{theorem}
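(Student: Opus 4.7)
The plan is to establish a key lemma stating that any two suitably random permutations of length $m$ admit a common order-isomorphic subpattern of length $\Omega\bigl(m^{2/3}/\log^{c} m\bigr)$ with probability $1 - o(m^{-2})$, and then to iterate. The exponent $2/3$ is motivated by the first-moment computation
\[
\expec{\#\{\text{common subpatterns of length }k\}} \;=\; \binom{n}{k}^2 \big/ k!,
\]
which is of constant order at $k = \Theta(n^{2/3})$; solving the resulting recurrence $m_{t+1} \le m_t - c m_t^{2/3}/\log^{c}\! m_t$ yields $t = O(n^{1/3}\log^{c'}\!n)$ rounds before the residual size drops below $n^{2/3}$, at which point one finishes with the trivial $O(\sqrt{n^{2/3}}) = O(n^{1/3})$ monotone decomposition from the introduction. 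Matching these two budgets to the claimed $\log^{11/6}$ is what pins down the exact polylog exponent.

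For the key lemma I would use a two-scale grid partition. Tile $[m]\times[m]$ by an $r\times r$ array of rectangular cells with $r$ on the order of $m^{1/3}$ up to polylog factors, so each cell contains $\sim m/r^2$ points from each of $\sigma$ and $\pi$ in expectation. Standard concentration for counts in random permutations (Chernoff/Azuma applied cell-by-cell and to horizontal/vertical strips) shows that, outside an event of probability $o(m^{-2})$, every cell and every strip carries the expected number of points up to a $O(\sqrt{\log m})$ factor in the deviation. After trimming a $O(\log^{1/2}\! m)$-fraction of points so that the two permutations have identical counts in every cell, I would construct the common subpattern in two layers: a \emph{coarse} layer selecting cells in a way compatible with both the position ordering and the value ordering, and a \emph{fine} layer selecting points inside each retained cell.

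The main difficulty is the fine layer: even when the $\sigma$- and $\pi$-counts in a cell agree, the internal point orderings are themselves random permutations on $\sim m^{1/3}$ elements, and matching them is a fresh instance of the same problem at a smaller scale, which would blow up the polylogarithmic loss if one recursed naively. I would avoid this by working at a scale where at most one point per cell is kept, which extracts $\sim r^2 = m^{2/3}/\mathrm{polylog}\,m$ points and sidesteps within-cell matching altogether; the polylog price is what one pays to guarantee that essentially every cell is ``aligned'', in the sense that in each horizontal strip the sequence of value-block labels of the chosen $\sigma$-points can be made to equal the corresponding sequence for $\pi$. A natural tool here is to apply the trivial $\sqrt{\,\cdot\,}$ bound at the cell-label scale: within each strip one further extracts a common monotone subpattern of the value-block-label sequences, losing only another $\sqrt{r}$ factor.

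Finally, to iterate, I would either couple the extraction to a sequential exposure of $\sigma$ and $\pi$ so that the residual permutations are, conditionally on the history, uniform on the remaining positions and values, or simply reapply the key lemma to the residual pair treated as worst case within a union bound. A union bound over the $O(n^{1/3}\log^{c'}\!n)$ rounds then gives total failure probability $o(n^{-1})$, and concatenating the common subpatterns extracted across rounds together with the $O(n^{1/3})$ monotone pieces from the base case proves the theorem. The delicate part, and where I expect the $11/6$ exponent to emerge, is balancing the polylog losses from the Chernoff deviations, the coarse strip-level matching, and the need to take a union bound over $\sim n^{1/3}$ successive rounds.
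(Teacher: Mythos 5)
Your proposed route---extract one long common subpattern, delete it, and iterate $\Theta(n^{1/3})$ times---is genuinely different from the paper's, but it has a gap at exactly the point you flag as delicate, and neither of your two suggested fixes closes it. The ``key lemma'' itself is fine in isolation: for a \emph{uniform} pair, a common subpattern of length $\Omega(m^{2/3}/\log^{1/3} m)$ exists with high probability (this is essentially the Dudek--Grytczuk--Ruci\'nski bound, sharpened to $\Omega(m^{2/3})$ by Bukh and Rudenko). The problem is the iteration. The subpattern you remove in round $t$ is chosen as a function of the entire pair $(\sigma,\pi)$, so conditioning on which points survive destroys uniformity of the residual; there is no sequential-exposure coupling under which the leftover pair is ``uniform on the remaining positions and values,'' because the survivors are selected adaptively by a global optimisation, not by independent coin flips. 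The fallback of treating the residual as worst case is fatal rather than merely lossy: for an adversarial pair of permutations of length $m$ one cannot guarantee a common subpattern longer than $O(\sqrt{m})$ (Erd\H{o}s--Szekeres-type bounds are all that is available, and the identity versus its reverse shows monotone extraction is the best general tool), so after the first round your recurrence degrades to $m_{t+1} \le m_t - O(\sqrt{m_t})$ and you only recover the trivial $O(\sqrt{n})$ bound from the introduction. Your per-round failure probability $o(m^{-2})$ also cannot simply be union-bounded, since after round one the lemma's hypothesis no longer holds.

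The paper sidesteps iteration entirely by building all the pieces simultaneously from a single random structure. Both permutations are realised as $n$ uniform points in $[0,1]^2$ (generated via Poisson processes of rate $2n$ so that counts in disjoint regions are independent); the Leighton--Shor theorem gives a perfect matching between the red and blue points with all edges of length $O\bigl(\log^{3/4}(n)/\sqrt{n}\bigr)$; the square is cut into an $M\times M$ grid with $M=\lceil n^{2/3}\log^{-1/3}(n)\rceil$ and each matched pair is labelled by its grid offset, of which there are only $O\bigl(M^2\log^{3/2}(n)/n\bigr)$; and for each fixed label the pairs form a bipartite multigraph on rows versus columns whose maximum degree is $O(\log n)$ with probability $1-o(n^{-1})$, so it decomposes into $O(\log n)$ matchings, each of which is a valid common pattern because all its points share the same offset and occupy distinct rows and columns. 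Since every piece is certified by the same high-probability event about the original uniform points, no re-randomisation or conditioning argument is needed. If you want to salvage your approach, you would need a removal-type lemma showing the residual pair remains ``pseudorandom enough'' for the extraction lemma after adaptive deletion, which is a substantial missing ingredient.
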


This immediately provides the following bound on $\expec{U(\sigma, \pi)}$.

\begin{corollary}
	\label{cor:joint-twins}
	Let $\sigma$ and $\pi$ be two independent uniformly random permutations of length $n$. Then
	\[\expec{U(\sigma, \pi)} = O\left( n^{{1}/{3}} \log^{11/6}(n) \right).\]
\end{corollary}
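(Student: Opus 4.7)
The plan is to derive this bound on the expectation from Theorem~\ref{thm:joint-twins} by a standard truncation argument, which requires only a crude worst-case upper bound on $U(\sigma, \pi)$ to control the contribution from the bad event.

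First I would observe that we have the trivial deterministic bound $U(\sigma, \pi) \leq n$ for any pair of permutations of length $n$: we may simply take each singleton as its own part, since any single element is order-isomorphic to any other single element. This bound is wildly pessimistic, but because the failure probability in Theorem~\ref{thm:joint-twins} is $o(n^{-1})$, anything polynomial in $n$ will suffice here.

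Next, let $A$ denote the event that $U(\sigma, \pi) \leq C n^{1/3} \log^{11/6}(n)$, where $C$ is the implicit constant from Theorem~\ref{thm:joint-twins}, so that $\mathbb{P}(A^c) = o(n^{-1})$. Splitting the expectation over $A$ and its complement and applying the trivial bound on the complement gives
\[
\expec{U(\sigma, \pi)} = \expec{U(\sigma, \pi) \cdot \1_A} + \expec{U(\sigma, \pi) \cdot \1_{A^c}} \leq C n^{1/3} \log^{11/6}(n) + n \cdot o(n^{-1}),
\]
and the second term is $o(1)$, so the right-hand side is $O(n^{1/3} \log^{11/6}(n))$ as desired.

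There is essentially no serious obstacle here; all the work has already gone into Theorem~\ref{thm:joint-twins}, and the only mild point to verify is that the probability bound $1 - o(n^{-1})$ is strong enough to absorb the trivial worst-case estimate $U(\sigma,\pi) \leq n$, which it comfortably is.
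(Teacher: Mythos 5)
Your proof is correct and is exactly the argument the paper has in mind: the paper states the corollary follows ``immediately'' from Theorem~\ref{thm:joint-twins}, and the implicit step is precisely your truncation using the trivial bound $U(\sigma,\pi)\leq n$ together with the $o(n^{-1})$ failure probability. Nothing further is needed.
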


The bound in Theorem~\ref{thm:joint-twins} (and Corollary~\ref{cor:joint-twins}) is tight up to the $\log^{11/6}(n)$ factor. Before seeing this, we make a brief detour to survey some closely related work on twinned subpermutations of a single permutation, from which our simple lower bound emerges. Formally, \emph{twins} $T = (T_1, T_2)$ of length $\ell$ in $\sigma \in S_n$ are a pair of disjoint subsequences $T_1 = \sigma_{i_1}, \dots, \sigma_{i_\ell}$ and $T_2 = \sigma_{j_1}, \dots, \sigma_{j_\ell}$ such that $T_1 \sim T_2$. The problem of finding long twins was introduced by Gawron \cite{gawron2014izomorficzne}, who showed that the longest twins guaranteed in a permutation of length $n$ is at most $O(n^{2/3})$ and conjectured that this should be tight (up to the constant). As observed in~\cite{gawron2014izomorficzne}, the Erd\H{o}s-Szekeres theorem provides a lower bound of $\Omega(n^{1/2})$. This lower bound was improved to $\Omega(n^{3/5})$ by Bukh and Rudenko \cite{bukh2020order}.

The proof of Gawron's upper bound comes from applying the first moment method to a uniformly random permutation, and there has since been work on finding a matching lower bound for random permutations. A lower bound of $\Omega\big(n^{2/3}/\log^{1/3}(n)\big)$ was shown by Dudek, Grytczuk and Ruci{\'n}ski~\cite{dudek2021variations}, and this was improved to the sharp bound of $\Omega(n^{2/3})$ by Bukh and Rudenko~\cite{bukh2020order}. There has also been further work of Dudek, Grytczuk and Ruci{\'n}ski in the same vein concerning \emph{$k$-twins}~\cite{dudek2021multiple}, which are a collection of $k$ pairwise disjoint order-isomorphic subpermutations of a single permutation, and other notions of twins with additional restrictions imposed or with a weaker similarity condition (see~\cite{dudek2022weak,dudek2021tight}).

Returning to Problem~\ref{prob:decomp}, if one chooses a permutation $\sigma$ of length $2n$ uniformly at random, then the patterns of the first and the second half of $\sigma$ are independent, uniformly random permutations of length $n$. Order-isomorphic subpermutations of length $\ell$ between the first and second of half of $\sigma$ give rise to twins of length $\ell$ in $\sigma$. Hence, from Gawron's result that the longest twin in a permutation of length $2n$ is of length $O(n^{2/3})$ with high probability~\cite{gawron2014izomorficzne}, it follows that the longest order-isomorphic subpermutations of two uniformly random permutations of length $n$ are also of length $O(n^{2/3})$ with high probability. This means that $\Omega(n^{1/3})$ pieces are needed with high probability.

The definition of similarity easily generalises to multiple permutations. We say that a collection of $k$ permutations $\sigma_1, \ldots, \sigma_k$ are \emph{$\ell$-similar} if each permutation $\sigma_j$ can be split into $\ell$ subpermutations $\sigma_{j}^{(1)}, \dots, \sigma_{j}^{(\ell)}$ such that $\sigma_{j}^{(i)} \sim \sigma_{j'}^{(i)}$ for all choices of $i \in [\ell]$, and $j, j' \in [k]$. This naturally leads to the problem of determining, for each $k$, the minimum $\ell(n)$ such that a set of $k$ permutations of length $n$ chosen independently and uniformly at random are $\ell$-similar. There is again a simple upper bound of $O(\sqrt{n})$ which comes from decomposing the permutations into increasing subpermutations. Using the result of Dudek, Grytczuk and Ruci{\'n}ski~\cite{dudek2021multiple} which states that the longest $k$-twins in a random permutation of length $n$ have length at most $O(n^{k/(2k-1)})$ with high probability, we can deduce a lower bound of $\Omega(n^{(k-1)/(2k-1)})$ in the same manner as the $k=2$ case.

Our proof of Theorem~\ref{thm:joint-twins} extends to give a bound in the setting of multiple permutations, and it is again tight up to a polylogarithmic factor.

\begin{theorem}
	\label{thm:multi}
	For any fixed integer $k\geq 2$, let $\sigma_1, \dots, \sigma_k$ be permutations of length $n$ chosen independently and uniformly at random. Then $\sigma_1, \dots, \sigma_k$ are $O\left(n^{\frac{k-1}{2k-1}} \log^{\frac{3(k-1)}{2} + \frac{1}{2k-1}}(n) \right)$-similar with probability at least $1 - o(n^{-1})$.
\end{theorem}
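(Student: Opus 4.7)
The plan is to adapt the proof of Theorem~\ref{thm:joint-twins} to $k$ uniformly random permutations by extending the key one-step lemma from pairs to $k$-tuples. The overall strategy is to iteratively extract common order-isomorphic pieces from all $k$ permutations simultaneously, each of length close to the maximum $O(n^{k/(2k-1)})$ given by~\cite{dudek2021multiple}, until the permutations are fully decomposed.

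First, I would prove a single-round extraction lemma: if $\sigma_1, \dots, \sigma_k$ are permutations of length $n'$ drawn from a sufficiently random (conditionally uniform) distribution, then with high probability there exist index sets $P_1, \dots, P_k \subseteq [n']$ of a common size $L(n') = \Omega\bigl(n'^{k/(2k-1)} \log^{-c}(n)\bigr)$ such that $\sigma_1|_{P_1}, \dots, \sigma_k|_{P_k}$ all share a common pattern. The natural approach is a first-moment argument: count the expected number of tuples $(P_1, \dots, P_k)$ of a chosen length whose values are pairwise order-isomorphic, then use block decomposition or coupling to turn this expectation into a high-probability statement. This is the core of the argument and generalises the corresponding extraction step from the $k=2$ proof essentially directly, at the cost of carrying an extra index through the calculation.

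Second, I would iterate. Each round removes a common subpattern of length $L(n')$ from the residual permutations, so after $\ell$ rounds the remaining length $n_\ell$ satisfies $\sum_{i < \ell} L(n_i) = n - n_\ell$. Since $L(n')$ grows as a positive power of $n'$, only $\ell = O\bigl(n^{(k-1)/(2k-1)} \log^{O(1)}(n)\bigr)$ rounds are required to exhaust the permutations. After each extraction the residual configuration is no longer uniform, but it is uniform conditional on the already-revealed values, so the regularity hypothesis of the extraction lemma is maintained throughout by a union bound over rounds.

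The main obstacle, as in the $k = 2$ proof, is controlling the concentration of common-pattern counts in the extraction lemma and then tracking how the polylogarithmic losses accumulate to the claimed exponent $3(k-1)/2 + 1/(2k-1)$. Each application of the block decomposition loses a $\log^{3/2}$-type factor from second-moment fluctuations; when $k-1$ layers of pairwise matching are needed to align all $k$ permutations, these losses compound to give the $3(k-1)/2$ contribution. The residual $1/(2k-1)$ comes from the extra log slack required to make the first-moment bound hold simultaneously across all $\Theta\bigl(n^{(k-1)/(2k-1)} \log^{O(1)}(n)\bigr)$ iterations with probability $1 - o(n^{-1})$. A careful bookkeeping of these two sources of log loss, together with the iteration outlined above, should yield the bound claimed in Theorem~\ref{thm:multi}.
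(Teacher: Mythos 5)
Your proposal does not follow the paper's argument, and it has two genuine gaps that I do not see how to repair.

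First, the extraction lemma you want is not available. The result of Dudek, Grytczuk and Ruci\'nski that you implicitly lean on says that the longest $k$-twins in a random permutation have length \emph{at most} $O(n^{k/(2k-1)})$; it is an upper bound, used in the paper only to derive the \emph{lower} bound $\Omega(n^{(k-1)/(2k-1)})$ on the number of parts. Your plan needs the opposite direction: a guarantee that $k$ independent random permutations \emph{contain} a common pattern of length $\Omega(n^{k/(2k-1)}\log^{-O(1)}n)$ with very high probability. A first-moment count cannot give this (first moments only bound maxima from above), and the matching lower bound is exactly the hard open direction even for $k=2$ in the deterministic setting; for random permutations the $k=2$ lower bound of Bukh and Rudenko is a substantial argument in its own right, and no analogue for general $k$ is cited or proved in your sketch. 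Second, the iteration is not sound as stated: once you extract a specific common subpattern (chosen as a function of the permutations), the residual permutations are conditioned on the existence and location of that subpattern and are \emph{not} uniform given the revealed values in any form that lets you reapply the lemma; maintaining a usable distributional hypothesis across $\Theta(n^{(k-1)/(2k-1)})$ adaptive rounds is precisely the difficulty that one-shot arguments are designed to avoid. Your accounting of the exponent $\tfrac{3(k-1)}{2}+\tfrac{1}{2k-1}$ is also reverse-engineered rather than derived.

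For contrast, the paper proves Theorem~\ref{thm:multi} by a direct, non-iterative extension of the proof of Theorem~\ref{thm:joint-twins}: generate $k$ point sets from independent Poisson processes, match $P_1$ to each $P_i$ by a minimax matching so that all matched points lie within distance $C\log^{3/4}(n)/\sqrt n$ (Theorem~\ref{thm:leighton-shor}), discretise $[0,1]^2$ into an $M\times M$ grid with $M=\lceil n^{\frac12+\frac1{2(2k-1)}}\log^{-\frac1{2k-1}}(n)\rceil$, label each red point by the $(2k-2)$-tuple of grid offsets of its $k-1$ partners, and properly edge-colour, for each label separately, the auxiliary bipartite multigraph on rows and columns. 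The number of labels is $O(M^{2(k-1)}\log^{3(k-1)/2}(n)/n^{k-1})$ (this is where the $\tfrac{3(k-1)}{2}$ comes from) and the maximum degree per label is $O(\log n)$ with probability $1-o(n^{-1})$, giving the claimed bound in a single pass with no conditioning issues. If you want to salvage your approach you would need to first prove the $k$-wise common-pattern lower bound and then replace the adaptive iteration with a scheme that fixes all randomness up front.
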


We give proofs in the next section, and discuss some further questions in the final section.

\section{Proof of the main theorems}
\label{sec:twins}
We begin with an outline of the proof of Theorem~\ref{thm:joint-twins}; the minor modifications required to prove Theorem~\ref{thm:multi} are given at the end of this section.

In the following, the idea of constructing an auxiliary bipartite multigraph has previously been used by Dudek, Grytczuk and Ruci{\'n}ski~\cite{dudek2021variations} and later by Bukh and Rudenko~\cite{bukh2020order} to study twins in a single random permutation. The use of a Poisson process to sample random permutations also appears in \cite{bukh2020order}.

It is convenient to work with random permutations generated from $n$ uniformly distributed points in the unit square $[0,1]^2$. With the $n$ points ordered by their $x$-coordinate, the permutation can be read off as the relative ordering of the $y$-coordinates. This representation was used by Bukh and Rudenko~\cite{bukh2020order}. Given sets of $n$ ``red" points and $n$ ``blue" points which define a pair of random permutations, we choose a matching between the red points and the blue points and only consider decompositions into subpermutations where the matched points are corresponding entries in the decomposition.

A useful strategy to simplify viewing patterns of permutations generated in this way is to discretise the unit square by splitting it up into an $M \times M$ grid of smaller squares, and consider the squares in which the points fall. Then, given a subpermutation in which there is at most one point in each row and column, the pattern of the subpermutation is determined by the squares in which each point lies and we do not need to know their positions within the squares. If we wish to use this subpermutation in our decomposition, we need the pattern of this subpermutation to be the same for the red points as for the matched blue points. We only consider the simple case where all of the matched points are offset by the same amount, and so we label each point by the relative position (measured in grid squares) of its matched point. We will handle each label using separate subpermutations, making it necessary to avoid having too many different possible labels. We will achieve this by ensuring matched points are close together. The following result of Leighton and Shor allows us to control the maximum distance between matched points.

\begin{theorem}[Leighton, Shor \cite{leighton1989tight}]
	\label{thm:leighton-shor}
	Suppose $A$ and $B$ are two independent sets of $n$ points uniformly distributed in $[0,1]^2$. Then there is a function $\alpha=\alpha(n)=\Omega(\sqrt{\log n})$ and an absolute constant $C$ such that, with probability at least $1 - n^{-\alpha}$, there is a perfect matching between $A$ and $B$ for which the maximum distance between two matched points is at most $C \log^{3/4}(n)/\sqrt{n}$.
\end{theorem}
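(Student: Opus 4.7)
The Leighton--Shor bound is a two-dimensional matching result, so the natural strategy is to reduce it to a discrepancy estimate plus a Hall/flow argument. The plan is to fix a target distance $r = C \log^{3/4}(n)/\sqrt{n}$ and prove that with probability at least $1 - n^{-\alpha}$, for every axis-aligned region $R$ (or more restrictively, every square), the $r$-fattening $R_r$ satisfies $|B \cap R_r| \geq |A \cap R|$ (and symmetrically). This is essentially Hall's condition for the bipartite graph whose edges are pairs of points at distance at most $r$; once it holds, a perfect matching with maximum edge length $r$ exists.

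The key step is a multiscale discrepancy bound. First I would overlay dyadic grids on $[0,1]^2$ at scales $2^{-k}$ for $k = 0, 1, \ldots, \lceil \tfrac12 \log_2 n\rceil$. For a dyadic square $Q$ of side $s$, the quantities $|A\cap Q|$ and $|B\cap Q|$ are binomially distributed with mean $ns^2$, so by a Chernoff bound together with a union bound over all $O(n)$ dyadic squares,
\[
\bigl| |A \cap Q| - |B \cap Q| \bigr| = O\bigl(\sqrt{ns^2 \log n} + \log n\bigr)
\]
for every dyadic $Q$ simultaneously, with probability $1 - n^{-\omega(1)}$. A general region $R$ can be approximated by a disjoint union of dyadic squares with at most $O(\log n)$ squares per scale touching its boundary, which leads to a discrepancy estimate of the form $O(\sqrt{n}\log n)$ on arbitrary nice regions --- enough to produce a matching of distance $O(\log(n)/\sqrt{n})$ but a factor $\log^{1/4}(n)$ weaker than needed.

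To reach the sharp exponent $3/4$, the plan is to replace the naive region-by-region use of Hall's condition by a multicommodity flow argument in the style of Leighton--Shor. Build a flow network whose nodes are the dyadic cells and whose capacities across scales encode the routing budget: the $r$-fattening of any region $R$ can ``absorb'' extra blue points by pushing excess flow through adjacent cells at the appropriate scale. The point is that while each scale contributes a discrepancy of size roughly $\sqrt{ns^2 \log n}$, the number of cells where this discrepancy must be routed at scale $s$ is only $O(1/s^2)$, and optimising the total routing cost $\sum_k 2^{k} \sqrt{n 2^{-2k}\log n}$ gives $O(\sqrt{n \log n}\cdot \sqrt{\log n}) = O(\sqrt{n}\log n)$ --- still off. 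The extra saving of $\log^{1/4}$ comes from noticing that the \emph{maximum} discrepancy at each scale, rather than being realised simultaneously at all cells, behaves like a supremum of roughly $1/s^2$ near-independent Gaussians, giving $\sqrt{\log(1/s^2)}$ in place of $\sqrt{\log n}$ at scale $s$; summing these sharper per-scale estimates yields the $\log^{3/4}$ factor.

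The main obstacle is exactly this last step: decoupling the scales so that the union bound at each scale uses only $O(\log(1/s^2))$ in the exponent rather than $O(\log n)$, and then assembling the per-scale flow routes into a single globally feasible flow. This requires a careful chaining argument (reminiscent of the Ajtai--Koml\'os--Tusn\'ady proof for the $L^2$ matching cost) together with a combinatorial construction showing that the flow network's cut capacities dominate the observed discrepancies at every scale simultaneously. The probability bound $1 - n^{-\alpha}$ with $\alpha = \Omega(\sqrt{\log n})$ comes from taking the tail parameter in the per-scale Chernoff estimates to be a slowly growing function of $n$; this is cheap because we have exponential concentration to spare at every individual scale.
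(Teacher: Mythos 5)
The paper does not prove this statement at all: it is quoted verbatim as an external black box from Leighton and Shor's 1989 paper, so there is no internal proof to compare against, and any complete argument here would be reproving a famously difficult result. Your outline does capture the correct skeleton of the actual Leighton--Shor strategy: reduce the minimax matching to a Hall-type condition comparing $|A\cap R|$ with $|B\cap R_r|$ over regions $R$, bound the excess $|B\cap R_r|-|B\cap R|$ from below by roughly $n\cdot r\cdot \mathrm{per}(R)$, and control the discrepancy $\bigl||A\cap R|-|B\cap R|\bigr|$ by a multiscale decomposition of $\partial R$ into dyadic cells. That part is sound.

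The gap is that the entire content of the theorem --- the exponent $3/4$ --- sits precisely in the step you flag as ``the main obstacle,'' and the heuristic you offer for it does not close. Two concrete issues. First, the union bound is not over $O(n)$ dyadic squares: Hall's condition must hold for \emph{every} region $R$, and the number of perimeter-$p$ regions resolvable at scale $s$ is $\exp\bigl(\Theta(p/s)\bigr)$, so the supremum is over exponentially many sets and the tail parameter needed at scale $s$ is governed by this entropy, not by $\log(1/s^2)$. Second, the arithmetic of your proposed saving does not produce $3/4$: replacing $\sqrt{\log n}$ by $\sqrt{\log(1/s^2)}\approx\sqrt{k}$ in $\sum_k 2^k\sqrt{n2^{-2k}\cdot k}=\sqrt{n}\sum_k\sqrt{k}$ still gives $\sqrt{n}\log^{3/2}(n)$, not $\sqrt{n}\log^{3/4}(n)$ (and your intermediate claim that $\sum_k 2^k\sqrt{n2^{-2k}\log n}=O(\sqrt{n}\log n)$ is itself off by a $\sqrt{\log n}$ factor, since each of the $\Theta(\log n)$ scales contributes $\sqrt{n\log n}$). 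The genuine mechanism in Leighton--Shor is a two-regime argument: fine scales below roughly $\log^{3/4}(n)/\sqrt{n}$ are cut off entirely (points inside such a cell can be matched within the cell for free), and the surviving coarse scales are handled by a delicate chaining/counting scheme whose optimisation of variance against region-entropy is what yields $\log^{3/4}$. Without that cutoff and that optimisation, your argument as written proves only the weaker $O(\log n/\sqrt{n})$ bound you already concede, which would degrade the polylog factor in Theorem~\ref{thm:joint-twins} but not its main exponent.
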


To complete the proof, we further partition the points corresponding to a specific label in such a way that each part contains at most one point from each row and column. This can be reduced to the problem of edge-colouring an auxiliary bipartite multigraph, and it suffices to bound the number of points in a given region. If we were to naively generate $n$ points uniformly at random, then the numbers of points in different regions would not be independent, even if the regions were disjoint. Thus, we generate points using a Poisson process with mean $2n$ and randomly delete excess points to obtain $n$ uniformly distributed points. The number of our $n$ points in a given region is bounded by the number of points of the Poisson process in that region, and this is independent for disjoint regions.

We will make use of the following well-known bound on the tail probabilities of the Poisson distribution.

\begin{lemma}[Theorem 5.4 in \cite{mitzenmacher2017probability}]
	\label{lem:poisson-tail}
	Let $X \sim \Poi\left( \lambda\right)$ and $x > \lambda$. Then
	\[ \mathbb{P} \left( X \geq x \right) \leq \frac {(e\lambda )^{x}e^{-\lambda }}{x^{x}}.\]
\end{lemma}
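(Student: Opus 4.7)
The plan is to prove the stated Poisson tail inequality by the standard Chernoff method: apply Markov's inequality to the exponential moment $e^{tX}$ and then optimize over the parameter $t > 0$.

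First I would recall the moment generating function of a Poisson random variable. For $X \sim \Poi(\lambda)$, a direct calculation using the series expansion of the exponential gives
\[
\mathbb{E}\bigl[e^{tX}\bigr] \;=\; \sum_{k=0}^{\infty} e^{tk}\,\frac{\lambda^k e^{-\lambda}}{k!} \;=\; e^{-\lambda}\sum_{k=0}^{\infty} \frac{(\lambda e^t)^k}{k!} \;=\; \exp\bigl(\lambda(e^t - 1)\bigr),
\]
valid for every real $t$.

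Next, for any $t > 0$, Markov's inequality applied to the nonnegative random variable $e^{tX}$ yields
\[
\mathbb{P}(X \geq x) \;=\; \mathbb{P}\bigl(e^{tX} \geq e^{tx}\bigr) \;\leq\; \frac{\mathbb{E}[e^{tX}]}{e^{tx}} \;=\; \exp\bigl(\lambda(e^t - 1) - tx\bigr).
\]
It then remains to choose $t$ to minimize the right-hand side. Differentiating the exponent in $t$ and setting the derivative to zero gives $\lambda e^t = x$, and since $x > \lambda$ this has the positive solution $t = \log(x/\lambda)$. Substituting this value back yields
\[
\mathbb{P}(X \geq x) \;\leq\; \exp\bigl(x - \lambda - x\log(x/\lambda)\bigr) \;=\; \frac{(e\lambda)^x\, e^{-\lambda}}{x^x},
\]
which is precisely the claimed bound.

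There is essentially no obstacle here: the MGF computation is a one-line rearrangement of the Poisson pmf, and the optimization in $t$ is a straightforward calculus exercise whose critical point $t = \log(x/\lambda)$ is automatically positive thanks to the hypothesis $x > \lambda$. The only thing one should double-check is that the minimum is attained at an interior point (so that boundary cases $t \to 0^+$ or $t \to \infty$ do not need to be treated), which follows from the fact that the exponent $\lambda(e^t-1) - tx$ is strictly convex in $t$.
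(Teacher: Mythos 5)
Your proof is correct and is exactly the standard Chernoff-bound argument (MGF of the Poisson, Markov's inequality, optimize at $t=\log(x/\lambda)$) that underlies the cited Theorem 5.4 of Mitzenmacher--Upfal; the paper itself gives no proof, simply quoting the textbook result. Nothing to add.
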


We are now ready to give the formal argument.

\begin{proof}[Proof of Theorem~\ref{thm:joint-twins}]

	Given $n$ points $\left\{(a_i, b_i) : i \in [n]\right\}$ in the unit square $[0,1]^2$ such that $a_1 <  a_2 < \dotsb < a_n$, we can define a permutation $\sigma$ by setting $\sigma(i) = j$, where $b_i$ is the $j$th smallest element of the set $\{b_1, \dots, b_n\}$. If the $n$ points are chosen uniformly at random in the unit square, this process generates a uniformly random permutation.

	Let $R$ be a 2D Poisson process with rate $2n$ and let $N_R$ denote the number of points of $R$ in $[0,1]^2$. Then $N_R \sim \Poi(2n)$ and
	\[ \mathbb{P}\left( N_R \leq n \right) \leq \exp \left( -  \frac{n^2}{4n} \right).\]
	Conditional on the number of points in $[0,1]^2$ being $m \geq n$, the $m$ points are distributed uniformly. After removing $m-n$ points uniformly at random, we have $n$ uniformly distributed points, which we call the \emph{red points}, and we can use this to get a uniformly random permutation $\sigma \in S_n$. Similarly, we can use an independent Poisson process $B$ to define a set of $n$ uniformly distributed \emph{blue points} and a corresponding permutation $\pi \in S_n$.

	We now match up the $n$ red points with the $n$ blue points using Theorem~\ref{thm:leighton-shor}.

	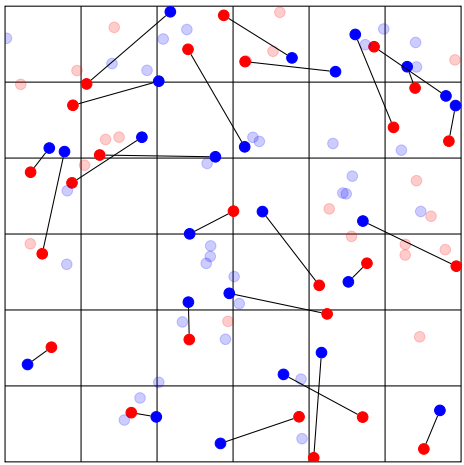
\begin{figure}
	\centering
	\begin{tikzpicture}[xscale=0.5\textwidth/1cm, yscale=0.5\textwidth/1cm]
		\draw (0,0) rectangle (1,1);
		\clip (0,0) rectangle (1,1);
		\draw (0, 0.16666666666666666) -- (1,0.16666666666666666);
		\draw (0.16666666666666666, 0) -- (0.16666666666666666,1);
		\draw (0, 0.3333333333333333) -- (1,0.3333333333333333);
		\draw (0.3333333333333333, 0) -- (0.3333333333333333,1);
		\draw (0, 0.5) -- (1,0.5);
		\draw (0.5, 0) -- (0.5,1);
		\draw (0, 0.6666666666666666) -- (1,0.6666666666666666);
		\draw (0.6666666666666666, 0) -- (0.6666666666666666,1);
		\draw (0, 0.8333333333333333) -- (1,0.8333333333333333);
		\draw (0.8333333333333333, 0) -- (0.8333333333333333,1);
		\draw node[style=vertex, red, opacity=1](r0) at (0.7062022986281737, 0.324695981345315) {};
		\draw node[style=vertex, red, opacity=1](r1) at (0.9183121824649292, 0.028324099172164646) {};
		\draw node[style=vertex, red, opacity=1](r2) at (0.27684008675089405, 0.10795370718547159) {};
		\draw node[style=vertex, red, opacity=1](r3) at (0.688964132031175, 0.3873405687215951) {};
		\draw node[style=vertex, red, opacity=1](r4) at (0.9732261563544754, 0.703766092413252) {};
		\draw node[style=vertex, red, opacity=1](r5) at (0.17882999162113236, 0.8293829848866756) {};
		\draw node[style=vertex, red, opacity=1](r6) at (0.5009156073330866, 0.5501677971073005) {};
		\draw node[style=vertex, red, opacity=1](r7) at (0.8091817311759989, 0.9110325833997828) {};
		\draw node[style=vertex, red, opacity=1](r8) at (0.10169909346112732, 0.25160783763136674) {};
		\draw node[style=vertex, red, opacity=1](r9) at (0.793569013681623, 0.4358683449015855) {};
		\draw node[style=vertex, red, opacity=1](r10) at (0.40121237516580055, 0.9053711225579613) {};
		\draw node[style=vertex, red, opacity=1](r11) at (0.8991532583697669, 0.8204098821475377) {};
		\draw node[style=vertex, red, opacity=1](r12) at (0.8518360621729151, 0.7340607988640941) {};
		\draw node[style=vertex, red, opacity=1](r13) at (0.05603357123186831, 0.6356293952095554) {};
		\draw node[style=vertex, red, opacity=1](r14) at (0.526397275588272, 0.8784053137975155) {};
		\draw node[style=vertex, red, opacity=1](r15) at (0.40401207249635, 0.26842294951046686) {};
		\draw node[style=vertex, red, opacity=1](r16) at (0.20755984390116453, 0.6733482699604347) {};
		\draw node[style=vertex, red, opacity=1](r17) at (0.7841996937064648, 0.09822530456504389) {};
		\draw node[style=vertex, red, opacity=1](r18) at (0.47958401300371645, 0.97988320932866) {};
		\draw node[style=vertex, red, opacity=1](r19) at (0.1468185772481496, 0.6122878788851661) {};
		\draw node[style=vertex, red, opacity=1](r20) at (0.644669289950765, 0.09893455882012035) {};
		\draw node[style=vertex, red, opacity=1](r21) at (0.9894998770087428, 0.4294308306054885) {};
		\draw node[style=vertex, red, opacity=1](r22) at (0.6768956574872981, 0.009005649517304318) {};
		\draw node[style=vertex, red, opacity=1](r23) at (0.14881834361344962, 0.782510551943364) {};
		\draw node[style=vertex, red, opacity=1](r24) at (0.08152244603214623, 0.4566352000150366) {};
		\draw node[style=vertex, red, opacity=0.2](r25) at (0.17433120711217823, 0.6510807806092181) {};
		\draw node[style=vertex, red, opacity=0.2](r26) at (0.5289390816957198, 0.8798298584184707) {};
		\draw node[style=vertex, red, opacity=0.2](r27) at (0.15775079672059336, 0.8586421207679388) {};
		\draw node[style=vertex, red, opacity=0.2](r28) at (0.9865372288181382, 0.8821464847718687) {};
		\draw node[style=vertex, red, opacity=0.2](r29) at (0.7597862992165564, 0.4948506485747609) {};
		\draw node[style=vertex, red, opacity=0.2](r30) at (0.6025969201650985, 0.9865535189579078) {};
		\draw node[style=vertex, red, opacity=0.2](r31) at (0.22040765839141138, 0.7074392775318027) {};
		\draw node[style=vertex, red, opacity=0.2](r32) at (0.965296762397882, 0.4657676517373167) {};
		\draw node[style=vertex, red, opacity=0.2](r33) at (0.8777488173319412, 0.47725831390813267) {};
		\draw node[style=vertex, red, opacity=0.2](r34) at (0.8774888094481618, 0.45397035895973303) {};
		\draw node[style=vertex, red, opacity=0.2](r35) at (0.034349189941555135, 0.8284577900022183) {};
		\draw node[style=vertex, red, opacity=0.2](r36) at (0.9340688915832758, 0.5390699024632541) {};
		\draw node[style=vertex, red, opacity=0.2](r37) at (0.9091673514255132, 0.27444516862345397) {};
		\draw node[style=vertex, red, opacity=0.2](r38) at (0.4887274789316395, 0.3083377983859898) {};
		\draw node[style=vertex, red, opacity=0.2](r39) at (0.24998265894745766, 0.7126599251188204) {};
		\draw node[style=vertex, red, opacity=0.2](r40) at (0.7108267061667395, 0.5552155264289989) {};
		\draw node[style=vertex, red, opacity=0.2](r41) at (0.9022014480282108, 0.6169614821608604) {};
		\draw node[style=vertex, red, opacity=0.2](r42) at (0.05542369353306321, 0.47861065206130454) {};
		\draw node[style=vertex, red, opacity=0.2](r43) at (0.587608327557369, 0.9011199850284789) {};
		\draw node[style=vertex, red, opacity=0.2](r44) at (0.23943532442467483, 0.9534894267860412) {};
		\draw node[style=vertex, blue, opacity=1](b0) at (0.6938976037330391, 0.23973900040897317) {};
		\draw node[style=vertex, blue, opacity=1](b1) at (0.3000736334880179, 0.7124350115982306) {};
		\draw node[style=vertex, blue, opacity=1](b2) at (0.7528561061327365, 0.394996879900452) {};
		\draw node[style=vertex, blue, opacity=1](b3) at (0.04936821924435675, 0.21369001954937808) {};
		\draw node[style=vertex, blue, opacity=1](b4) at (0.09705309579255642, 0.6886834849920827) {};
		\draw node[style=vertex, blue, opacity=1](b5) at (0.98787881984423, 0.7818451025162539) {};
		\draw node[style=vertex, blue, opacity=1](b6) at (0.6105225934515158, 0.19194519474705424) {};
		\draw node[style=vertex, blue, opacity=1](b7) at (0.36246567950048414, 0.9877394503704601) {};
		\draw node[style=vertex, blue, opacity=1](b8) at (0.4048108913656835, 0.5004020850407428) {};
		\draw node[style=vertex, blue, opacity=1](b9) at (0.5254765886937326, 0.6913273639945751) {};
		\draw node[style=vertex, blue, opacity=1](b10) at (0.9535537002018528, 0.11295314840676485) {};
		\draw node[style=vertex, blue, opacity=1](b11) at (0.9668115930107262, 0.8030551640855773) {};
		\draw node[style=vertex, blue, opacity=1](b12) at (0.33164126184037, 0.09869472771709766) {};
		\draw node[style=vertex, blue, opacity=1](b13) at (0.7846824204148839, 0.5284399200131894) {};
		\draw node[style=vertex, blue, opacity=1](b14) at (0.33704350716597264, 0.8353973852804982) {};
		\draw node[style=vertex, blue, opacity=1](b15) at (0.881749395978703, 0.8671308298783007) {};
		\draw node[style=vertex, blue, opacity=1](b16) at (0.6291805780292972, 0.8864767696647602) {};
		\draw node[style=vertex, blue, opacity=1](b17) at (0.4917101898423507, 0.36984602622062246) {};
		\draw node[style=vertex, blue, opacity=1](b18) at (0.4020456201628176, 0.35047670730054836) {};
		\draw node[style=vertex, blue, opacity=1](b19) at (0.5644461012252803, 0.5492333099529227) {};
		\draw node[style=vertex, blue, opacity=1](b20) at (0.7679801384914596, 0.9379348089869762) {};
		\draw node[style=vertex, blue, opacity=1](b21) at (0.7246061308308218, 0.8562918767366846) {};
		\draw node[style=vertex, blue, opacity=1](b22) at (0.13039169443112283, 0.6808427619943374) {};
		\draw node[style=vertex, blue, opacity=1](b23) at (0.46125777361941556, 0.6694881357773304) {};
		\draw node[style=vertex, blue, opacity=1](b24) at (0.4724058726077113, 0.04043879657051285) {};
		\draw node[style=vertex, blue, opacity=0.2](b25) at (0.6512103799338597, 0.050881493975450914) {};
		\draw node[style=vertex, blue, opacity=0.2](b26) at (0.761515854177336, 0.6270004109894689) {};
		\draw node[style=vertex, blue, opacity=0.2](b27) at (0.4832727651093192, 0.2690547559058465) {};
		\draw node[style=vertex, blue, opacity=0.2](b28) at (0.5434773808125037, 0.7122399274444818) {};
		\draw node[style=vertex, blue, opacity=0.2](b29) at (0.2959732318120147, 0.14024739059625807) {};
		\draw node[style=vertex, blue, opacity=0.2](b30) at (0.33710939469887313, 0.1745255130678759) {};
		\draw node[style=vertex, blue, opacity=0.2](b31) at (0.5135574564327725, 0.34734270526699335) {};
		\draw node[style=vertex, blue, opacity=0.2](b32) at (0.4499498753135653, 0.4508981386417313) {};
		\draw node[style=vertex, blue, opacity=0.2](b33) at (0.38891852747548, 0.30709231169657303) {};
		\draw node[style=vertex, blue, opacity=0.2](b34) at (0.39865968824833464, 0.9487639026064536) {};
		\draw node[style=vertex, blue, opacity=0.2](b35) at (0.5576460665395266, 0.7032518949272571) {};
		\draw node[style=vertex, blue, opacity=0.2](b36) at (0.9115345587804586, 0.5494549646210898) {};
		\draw node[style=vertex, blue, opacity=0.2](b37) at (0.34682868883052254, 0.9280166269485455) {};
		\draw node[style=vertex, blue, opacity=0.2](b38) at (0.7481131995027119, 0.5881851499551537) {};
		\draw node[style=vertex, blue, opacity=0.2](b39) at (0.4509061568540363, 0.47393014012698004) {};
		\draw node[style=vertex, blue, opacity=0.2](b40) at (0.13529805384033441, 0.4334868562389169) {};
		\draw node[style=vertex, blue, opacity=0.2](b41) at (0.7406483598353373, 0.5899294395266951) {};
		\draw node[style=vertex, blue, opacity=0.2](b42) at (0.7189663407450523, 0.6985779542691091) {};
		\draw node[style=vertex, blue, opacity=0.2](b43) at (0.23455220349342396, 0.8740269690474276) {};
		\draw node[style=vertex, blue, opacity=0.2](b44) at (0.31133765764658794, 0.8591735054747405) {};
		\draw node[style=vertex, blue, opacity=0.2](b45) at (0.7899211102751608, 0.9155502167907655) {};
		\draw node[style=vertex, blue, opacity=0.2](b46) at (0.8304120694344994, 0.9500098266989916) {};
		\draw node[style=vertex, blue, opacity=0.2](b47) at (0.4411245580617038, 0.4356238738196557) {};
		\draw node[style=vertex, blue, opacity=0.2](b48) at (0.5022960474544931, 0.4066208845413454) {};
		\draw node[style=vertex, blue, opacity=0.2](b49) at (0.8690511144906361, 0.6839910135836759) {};
		\draw node[style=vertex, blue, opacity=0.2](b50) at (0.9021329895094164, 0.8666566899148342) {};
		\draw node[style=vertex, blue, opacity=0.2](b51) at (0.9002541284573323, 0.9204076123264435) {};
		\draw node[style=vertex, blue, opacity=0.2](b52) at (0.649013098830954, 0.18150141900310174) {};
		\draw node[style=vertex, blue, opacity=0.2](b53) at (0.0030802352852373266, 0.9295759378548734) {};
		\draw node[style=vertex, blue, opacity=0.2](b54) at (0.13646722073555986, 0.5949479698341059) {};
		\draw node[style=vertex, blue, opacity=0.2](b55) at (0.26159738904931784, 0.09185994970807301) {};
		\draw node[style=vertex, blue, opacity=0.2](b56) at (0.44313491893997353, 0.6548459974155452) {};
		\draw (r0) -- (b17);
		\draw (r1) -- (b10);
		\draw (r2) -- (b12);
		\draw (r3) -- (b19);
		\draw (r4) -- (b5);
		\draw (r5) -- (b7);
		\draw (r6) -- (b8);
		\draw (r7) -- (b11);
		\draw (r8) -- (b3);
		\draw (r9) -- (b2);
		\draw (r10) -- (b9);
		\draw (r11) -- (b15);
		\draw (r12) -- (b20);
		\draw (r13) -- (b4);
		\draw (r14) -- (b21);
		\draw (r15) -- (b18);
		\draw (r16) -- (b23);
		\draw (r17) -- (b6);
		\draw (r18) -- (b16);
		\draw (r19) -- (b1);
		\draw (r20) -- (b24);
		\draw (r21) -- (b13);
		\draw (r22) -- (b0);
		\draw (r23) -- (b14);
		\draw (r24) -- (b22);
	\end{tikzpicture}
	\caption{The square $[0,1]^2$ split into $M \times M$ boxes. The red and blue points have been sampled from a Poisson point process with mean 50, and 25 of each have been selected uniformly at random to form the red and blue permutations respectively. The selected points have been matched using a minimax matching.
	}
\end{figure}

	Let $M = \ceil{n^{2/3} \log^{-1/3}(n)}$ and split the square $[0,1]^2$ into an $M \times M$ grid with evenly spaced rows and columns. Let $S_{i,j}$ denote the square in the $i$th row and $j$th column. That is,
	\[ S_{i,j} = \left[ \frac{j-1}{M}, \frac{j}{M} \right) \times \left[ \frac{i-1}{M}, \frac{i}{M} \right). \]

	Suppose that we have a set of red points such that no two red points are in the same row or column. Then the pattern of the corresponding subpermutation is entirely determined by the squares, and we do not need to know the particular points. If the squares for the matched blue points follow the same pattern, then the red points and blue points correspond to order-isomorphic subpermutations of $\sigma$ and $\pi$.

	To find such sets of points, we will look for matchings in which all edges have the same label in an auxiliary edge-labelled bipartite multigraph $G$ constructed as follows. Let $G$ have vertex set $\{i_1, \dots, i_{M}\} \cup \{j_1, \dots, j_{M}\}$. Given a red point $r$ in row $r_\text{row}$ and column $r_\text{col}$ matched with a blue point $b$ in row $b_\text{row}$ and column $b_\text{col}$, add an edge from row $i_{r_\text{row}}$ to column $j_{r_\text{col}}$ in the graph and label it ($b_\text{row} - r_\text{row}$, $b_\text{col} - r_\text{col}$), i.e. label it with the relative position of $b$ from $r$. Note that a matching in $G$ represents red points in different rows and columns, so a matching in which all edges have the same label, as explained above, corresponds to order-isomorphic subpermutations of $\sigma$ and $\pi$. It will therefore suffice to decompose the edges of $G$ into such matchings. Equivalently, we wish to separately properly edge-colour the edges with each label. Note that as each matched edge has length at most $C \log^{3/4}(n)/\sqrt{n}$, there are at most
	\begin{equation}
		\label{eq:numberlabels}
		\left( 2M \frac{C \log^{3/4}(n)}{\sqrt{n}} + 2\right)^2 = O \left(\frac{M^2 \log^{3/2}(n)}{n}\right)
	\end{equation}
	possible labels. We will handle each label separately.

	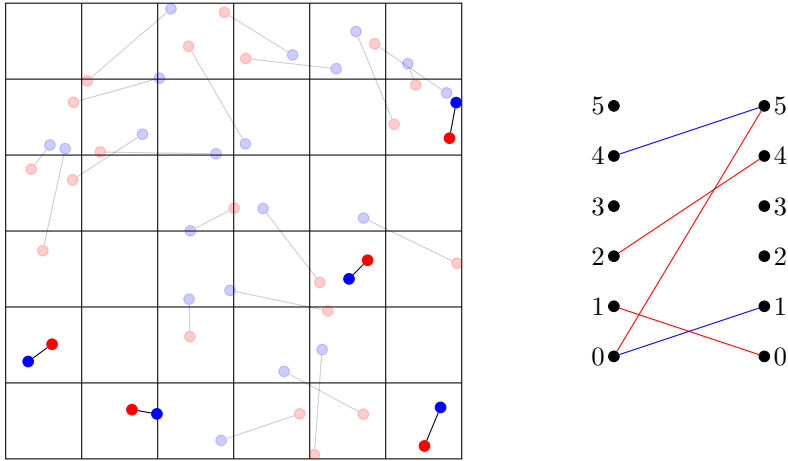
\begin{figure}
	\centering
	\begin{subfigure}{0.5\textwidth}
		\centering
		\begin{tikzpicture}[xscale=\textwidth/1cm, yscale=\textwidth/1cm]
			\draw (0,0) rectangle (1,1);
			\clip (0,0) rectangle (1,1);
			\draw (0, 0.16666666666666666) -- (1,0.16666666666666666);
			\draw (0.16666666666666666, 0) -- (0.16666666666666666,1);
			\draw (0, 0.3333333333333333) -- (1,0.3333333333333333);
			\draw (0.3333333333333333, 0) -- (0.3333333333333333,1);
			\draw (0, 0.5) -- (1,0.5);
			\draw (0.5, 0) -- (0.5,1);
			\draw (0, 0.6666666666666666) -- (1,0.6666666666666666);
			\draw (0.6666666666666666, 0) -- (0.6666666666666666,1);
			\draw (0, 0.8333333333333333) -- (1,0.8333333333333333);
			\draw (0.8333333333333333, 0) -- (0.8333333333333333,1);
			\draw node[style=vertex, red, opacity=0.2](r0) at (0.7062022986281737, 0.324695981345315) {};
			\draw node[style=vertex, red, opacity=1](r1) at (0.9183121824649292, 0.028324099172164646) {};
			\draw node[style=vertex, red, opacity=1](r2) at (0.27684008675089405, 0.10795370718547159) {};
			\draw node[style=vertex, red, opacity=0.2](r3) at (0.688964132031175, 0.3873405687215951) {};
			\draw node[style=vertex, red, opacity=1](r4) at (0.9732261563544754, 0.703766092413252) {};
			\draw node[style=vertex, red, opacity=0.2](r5) at (0.17882999162113236, 0.8293829848866756) {};
			\draw node[style=vertex, red, opacity=0.2](r6) at (0.5009156073330866, 0.5501677971073005) {};
			\draw node[style=vertex, red, opacity=0.2](r7) at (0.8091817311759989, 0.9110325833997828) {};
			\draw node[style=vertex, red, opacity=1](r8) at (0.10169909346112732, 0.25160783763136674) {};
			\draw node[style=vertex, red, opacity=1](r9) at (0.793569013681623, 0.4358683449015855) {};
			\draw node[style=vertex, red, opacity=0.2](r10) at (0.40121237516580055, 0.9053711225579613) {};
			\draw node[style=vertex, red, opacity=0.2](r11) at (0.8991532583697669, 0.8204098821475377) {};
			\draw node[style=vertex, red, opacity=0.2](r12) at (0.8518360621729151, 0.7340607988640941) {};
			\draw node[style=vertex, red, opacity=0.2](r13) at (0.05603357123186831, 0.6356293952095554) {};
			\draw node[style=vertex, red, opacity=0.2](r14) at (0.526397275588272, 0.8784053137975155) {};
			\draw node[style=vertex, red, opacity=0.2](r15) at (0.40401207249635, 0.26842294951046686) {};
			\draw node[style=vertex, red, opacity=0.2](r16) at (0.20755984390116453, 0.6733482699604347) {};
			\draw node[style=vertex, red, opacity=0.2](r17) at (0.7841996937064648, 0.09822530456504389) {};
			\draw node[style=vertex, red, opacity=0.2](r18) at (0.47958401300371645, 0.97988320932866) {};
			\draw node[style=vertex, red, opacity=0.2](r19) at (0.1468185772481496, 0.6122878788851661) {};
			\draw node[style=vertex, red, opacity=0.2](r20) at (0.644669289950765, 0.09893455882012035) {};
			\draw node[style=vertex, red, opacity=0.2](r21) at (0.9894998770087428, 0.4294308306054885) {};
			\draw node[style=vertex, red, opacity=0.2](r22) at (0.6768956574872981, 0.009005649517304318) {};
			\draw node[style=vertex, red, opacity=0.2](r23) at (0.14881834361344962, 0.782510551943364) {};
			\draw node[style=vertex, red, opacity=0.2](r24) at (0.08152244603214623, 0.4566352000150366) {};
			\draw node[style=vertex, blue, opacity=0.2](b0) at (0.6938976037330391, 0.23973900040897317) {};
			\draw node[style=vertex, blue, opacity=0.2](b1) at (0.3000736334880179, 0.7124350115982306) {};
			\draw node[style=vertex, blue, opacity=1](b2) at (0.7528561061327365, 0.394996879900452) {};
			\draw node[style=vertex, blue, opacity=1](b3) at (0.04936821924435675, 0.21369001954937808) {};
			\draw node[style=vertex, blue, opacity=0.2](b4) at (0.09705309579255642, 0.6886834849920827) {};
			\draw node[style=vertex, blue, opacity=1](b5) at (0.98787881984423, 0.7818451025162539) {};
			\draw node[style=vertex, blue, opacity=0.2](b6) at (0.6105225934515158, 0.19194519474705424) {};
			\draw node[style=vertex, blue, opacity=0.2](b7) at (0.36246567950048414, 0.9877394503704601) {};
			\draw node[style=vertex, blue, opacity=0.2](b8) at (0.4048108913656835, 0.5004020850407428) {};
			\draw node[style=vertex, blue, opacity=0.2](b9) at (0.5254765886937326, 0.6913273639945751) {};
			\draw node[style=vertex, blue, opacity=1](b10) at (0.9535537002018528, 0.11295314840676485) {};
			\draw node[style=vertex, blue, opacity=0.2](b11) at (0.9668115930107262, 0.8030551640855773) {};
			\draw node[style=vertex, blue, opacity=1](b12) at (0.33164126184037, 0.09869472771709766) {};
			\draw node[style=vertex, blue, opacity=0.2](b13) at (0.7846824204148839, 0.5284399200131894) {};
			\draw node[style=vertex, blue, opacity=0.2](b14) at (0.33704350716597264, 0.8353973852804982) {};
			\draw node[style=vertex, blue, opacity=0.2](b15) at (0.881749395978703, 0.8671308298783007) {};
			\draw node[style=vertex, blue, opacity=0.2](b16) at (0.6291805780292972, 0.8864767696647602) {};
			\draw node[style=vertex, blue, opacity=0.2](b17) at (0.4917101898423507, 0.36984602622062246) {};
			\draw node[style=vertex, blue, opacity=0.2](b18) at (0.4020456201628176, 0.35047670730054836) {};
			\draw node[style=vertex, blue, opacity=0.2](b19) at (0.5644461012252803, 0.5492333099529227) {};
			\draw node[style=vertex, blue, opacity=0.2](b20) at (0.7679801384914596, 0.9379348089869762) {};
			\draw node[style=vertex, blue, opacity=0.2](b21) at (0.7246061308308218, 0.8562918767366846) {};
			\draw node[style=vertex, blue, opacity=0.2](b22) at (0.13039169443112283, 0.6808427619943374) {};
			\draw node[style=vertex, blue, opacity=0.2](b23) at (0.46125777361941556, 0.6694881357773304) {};
			\draw node[style=vertex, blue, opacity=0.2](b24) at (0.4724058726077113, 0.04043879657051285) {};
			\draw[opacity=0.2] (r0) -- (b17);0.2
			\draw (r1) -- (b10);
			\draw (r2) -- (b12);
			\draw[opacity=0.2] (r3) -- (b19);
			\draw (r4) -- (b5);
			\draw[opacity=0.2] (r5) -- (b7);
			\draw[opacity=0.2] (r6) -- (b8);
			\draw[opacity=0.2] (r7) -- (b11);
			\draw (r8) -- (b3);
			\draw (r9) -- (b2);
			\draw[opacity=0.2] (r10) -- (b9);
			\draw[opacity=0.2] (r11) -- (b15);
			\draw[opacity=0.2] (r12) -- (b20);
			\draw[opacity=0.2] (r13) -- (b4);
			\draw[opacity=0.2] (r14) -- (b21);
			\draw[opacity=0.2] (r15) -- (b18);
			\draw[opacity=0.2] (r16) -- (b23);
			\draw[opacity=0.2] (r17) -- (b6);
			\draw[opacity=0.2] (r18) -- (b16);
			\draw[opacity=0.2] (r19) -- (b1);
			\draw[opacity=0.2] (r20) -- (b24);
			\draw[opacity=0.2] (r21) -- (b13);
			\draw[opacity=0.2] (r22) -- (b0);
			\draw[opacity=0.2] (r23) -- (b14);
			\draw[opacity=0.2] (r24) -- (b22);
		\end{tikzpicture}%
	\end{subfigure}%
	\begin{subfigure}{0.5\textwidth}
		\centering
		\begin{tikzpicture}[yscale=4/6]
			\draw node[vertex](row0) at (0, 0) {};
			\draw node[left] at (0, 0) {0};
			\draw  node[vertex](col0) at (2, 0) {};
			\draw node[right] at (2, 0) {0};
			\draw node[vertex](row1) at (0, 1) {};
			\draw node[left] at (0, 1) {1};
			\draw  node[vertex](col1) at (2, 1) {};
			\draw node[right] at (2, 1) {1};
			\draw node[vertex](row2) at (0, 2) {};
			\draw node[left] at (0, 2) {2};
			\draw  node[vertex](col2) at (2, 2) {};
			\draw node[right] at (2, 2) {2};
			\draw node[vertex](row3) at (0, 3) {};
			\draw node[left] at (0, 3) {3};
			\draw  node[vertex](col3) at (2, 3) {};
			\draw node[right] at (2, 3) {3};
			\draw node[vertex](row4) at (0, 4) {};
			\draw node[left] at (0, 4) {4};
			\draw  node[vertex](col4) at (2, 4) {};
			\draw node[right] at (2, 4) {4};
			\draw node[vertex](row5) at (0, 5) {};
			\draw node[left] at (0, 5) {5};
			\draw  node[vertex](col5) at (2, 5) {};
			\draw node[right] at (2, 5) {5};
			\draw[red] (row0) to (col5);
			\draw[red] (row1) to (col0);
			\draw[red] (row2) to (col4);
			\draw[blue] (row0) to (col1);
			\draw[blue] (row4) to (col5);
		\end{tikzpicture}
	\end{subfigure}
	\caption{A sample minimax matching along with the bipartite graph for the label $(0,0)$. The 5 edges can be coloured using $2$ colours, and so the subpermutations of length 5 can be decomposed into $2$ parts.}
\end{figure}

	It is well-known that a bipartite multigraph can be edge-coloured using $\Delta$ colours where $\Delta$ is the maximum degree of the graph. In fact, this can be done efficiently (see e.g.~Section 20.9 in \cite{schrijver2003combinatorial}). We bound the degree in two steps: first we bound the number of distinct edges labelled $(c_1, c_2)$ that a vertex sees, then we bound the maximum number of repetitions of an edge.

	There is an edge from row $i$ to column $j$ with label $(c_1, c_2)$ if and only if there is a red point in the square $S_{i,j}$ matched to a blue point in the square $S_{ i + c_1, j + c_2}$. In particular, this means that there must be a point from the Poisson process $R$ in $S_{i,j}$ and from the process $B$ in the square $S_{ i + c_1, j + c_2}$. This happens with probability \[\left(1 - \exp\left( - \frac{2n}{M^2} \right)\right)^2 \leq \frac{4n^2}{M^4}.\]

	As the number of points of a Poisson process in a particular square is independent of the numbers in the other squares, the number of distinct edges labelled $(c_1, c_2)$ that are incident with a given vertex in $G$, is dominated by a binomial random variable with $M$ trials and success probability $4n^2/M^4$. Let $X \sim \Bin\left( M, 4n^2/M^4\right)$. Then
	\[\mathbb{E}\left[ X\right] = 4n^2/M^3 \leq \frac{4n^2}{\left(n^{2/3} \log^{-1/3}(n)\right)^3} = 4 \log n.\]
	We also have
	\[4n^2/M^3 \geq \frac{4n^2}{\left(n^{2/3} \log^{-1/3}(n) + 1\right)^3} \geq 3 \log n\]
	for $n \geq 64$.
	Hence, using a well-known Chernoff bound (Theorem 4.4 in \cite{mitzenmacher2017probability}),
	\begin{align*}
		\mathbb{P}\left(X \geq 12 \log n \right) & \leq \mathbb{P} \left( X \geq 12n^2/M^3\right) \\
		                                         & \leq \exp \left(- 4 n^2/M^3\right)             \\
		                                         & \leq \exp \left( - 3 \log n \right)            \\
		                                         & = n^{-3}.
	\end{align*}

	There are $2M$ vertices in $G$ and there are $O \big({M^2 \log^{6/4}(n)}/{n}\big)$ labels (see (\ref{eq:numberlabels})), so the probability that any vertex is incident with at least $12\log n$ distinct edges of the same label is at most
	\begin{align*}
		n^{-3} \cdot 2M \cdot O \left(\frac{M^2 \log^{3/2}(n)}{n}\right)  = O \left( \frac{\log^{1/2}(n)  }{n^{2}} \right)
		= o(n^{-1}).
	\end{align*}

	We now look to bound the number of times a particular edge appears in the graph. For there to be 8 edges from row $i$ to column $j$, the square $S_{i,j}$ must contain at least 8 points of the Poisson process $R$. The number of such points in any given square is a Poisson random variable with mean \[2n/M^2 \leq \frac{2 \log^{2/3}(n)}{n^{1/3}}.\] Hence, using the bound from Lemma \ref{lem:poisson-tail}, the probability that there are at least 8 points in $S_{i,j}$ is at most
	\[\frac{ (2e)^8 \log^{16/3}(n)}{8^8 n^{8/3}} \exp\left( - \frac{2 \log^{2/3}(n)}{n^{1/3}} \right) \leq \frac{\log^{16/3}(n)}{n^{8/3}}. \]
	Taking the union bound over all $M^2$ squares, the probability that any square contains 8 points is at most $O \big({\log^{14/3}(n)}/{n^{4/3}}\big) = o(n^{-1})$.

	Consider the subgraph of $G$ formed by the edges with label $(c_1, c_2)$. With high probability, every edge is repeated at most 7 times and every vertex sees at most $12 \log n$ distinct edges. Hence, the maximum degree is bounded by $84 \log n$, and so we can cover the edges with $84 \log n$ matchings. We can do this separately for every distinct label and cover all edges in at most
	\[O \left(\frac{M^2 \log^{6/4}(n)}{n}\right) \cdot 84 \log n = O\left( n^{{1}/{3}} \log^{11/6}(n) \right)\]
	matchings, each consisting of edges with the same label as required.
\end{proof}

We remark that this proof gives a simple, efficient algorithm to decompose two random permutations $\sigma$ and $\pi$ into twins. More explicitly, given a permutation $\sigma$ of length $n$, generate $n$ red points in $[0,1]^2$ by first sampling $n$ values $x_1, \dots, x_n$ uniformly in $[0,1]$ to be the $x$ coordinates and then sampling $n$ values $y_1, \dots, y_n$ uniformly in $[0,1]$ to be the $y$ coordinates. By relabelling the values as necessary, we can assume that $x_1 < \dotsb < x_n$ and $y_1 < \dotsb < y_n$. Let the $n$ red points in $[0,1]^2$ be $(x_i, y_{\sigma(i)})$, and similarly choose $n$ blue points in $[0,1]^2$ for $\pi$. We then find a minimax matching between the red and blue points. This can easily be done in $O(n^{5/2} \log n)$ time by using a binary search over the maximum allowed edge weight (from the $\binom{n}{2}$ possibilities) and checking if there is a perfect matching using the allowed edges in $O(n^{5/2})$ time \cite{hopcroft1973n}. Crudely, there are at most $n$ possible labels, and for each label we need to edge-colour a bipartite multigraph with at most $n$ edges. Each colouring can be done in $O(n \log n)$ time \cite{alon2003simple,schrijver2003combinatorial}. This process therefore decomposes the permutations into order-isomorphic subpermutations in $O(n^{5/2} \log n)$ time.

The decomposition produced by the above process may not be optimal. However, the proof of Theorem~\ref{thm:joint-twins} shows that the algorithm uses on average $O\big( n^{{1}/{3}} \log^{11/6}(n) \big)$ subpermutations, which is not far from the simple lower bound of $\Omega(n^{1/3})$.

The preceding argument can be generalised with very few modifications to prove Theorem~\ref{thm:multi}. By starting with $k$ independent Poisson processes with rate $2n$, we can generate $k$ collections of $n$ uniformly distributed points in $[0,1]^2$ giving rise to independent uniformly random permutations $\sigma_1,\ldots, \sigma_k \in S_n$. Let the $n$ points determining $\sigma_i$ be $P_i$. The points $P_1$ will play the role of the red points in the proof of Theorem \ref{thm:joint-twins}, while the other sets of points mirror the blue points. Accordingly, we match up the points $P_1$ and $P_i$ for each $i = 2, \dots, k$ using minimax matchings. By Theorem~\ref{thm:leighton-shor}, with probability $1 - o(n^{-1})$, no edge in any of the matchings is longer than $C \log^{3/4}(n)/\sqrt{n}$.

Take $M = \ceil{n^{\frac12+\frac{1}{2(2k-1)}} \log^{-\frac1{2k-1}}(n)}$ and split the square $[0,1]^2$ into an $M \times M$ grid with evenly spaced rows and columns. We construct a similar bipartite multigraph as in the proof of Theorem \ref{thm:joint-twins}. Namely, suppose we have a `red' point $p_1$ matched to points $p_2, \ldots, p_k$, where $p_x\in P_x$ is in row $r_x$ and column $c_x$ for each $x\in [k]$. We then add an edge from row vertex $i_{r_1}$ to column vertex $j_{c_1}$ in the graph and label it by
\[	(r_2-r_1, c_2-c_1, r_3-r_1,c_3-c_1, \ldots, r_k-r_1, c_k-c_1).\]
With probability $1 - o(n^{-1})$, there are at most \[O \left( \frac{M^{2(k-1)}
		\log^{3(k-1)/2}(n)}{n^{k-1}} \right)\]
distinct labels. To bound the degree of the multigraph, it suffices to show that the probability of a vertex being incident to $3 \cdot 2^k \log n$ distinct edges with the same label is $o(n^{-1})$, and the probability that an edge has multiplicity at least $4k$ is also $o(n^{-1})$. This can be done as in the proof of Theorem \ref{thm:joint-twins}. Hence, with probability $1 - o(n^{-1})$, the permutations are $\ell$-similar for
\begin{align*}
	\ell & = O \left( \frac{M^{2(k-1)} \log^{3(k-1)/2}(n)}{n^{k-1}} \right) \cdot  3 \cdot 2^k(4k - 1) \log n \\
	     & = O \left( n^{\frac{k-1}{2k-1}} \log^{\frac{3(k-1)}{2} + \frac{1}{(2k-1)}}(n) \right).
\end{align*}

\section{Open problems}

Theorem \ref{thm:joint-twins} is tight up to the polylogarithmic factor and it would be interesting to know whether some polylogarithmic factor is necessary, or if two uniformly random permutations are $O(n^{1/3})$-similar. Even if a polylogarithmic factor is necessary, it is likely that the $\log^{11/6}(n)$ we have shown can be improved. For example, we have used a minimax matching which guarantees that the number of labels is small and assumed that every label is equally bad, but it is likely that some of the labels have fewer edges and the edges can be covered using fewer matchings. Indeed, if we replace the minimax matching with a matching that minimises the transportation distance, the average distance between a red point and the matched blue point is $\Theta\big(\log^{1/2}(n)/\sqrt{n}\big)$, saving a factor of $\log^{{1}/{4}}(n)$. This suggests that most edges should be spread over fewer labels and that the polylogarithmic factor may be improved upon.

\begin{problem}
Are two permutations of length $n$ chosen independently and uniformly at random $O(n^{1/3})$-similar with high probability?
\end{problem}

The other main problem pertaining to (general) twins in permutations is to determine the maximum size of a twin that one is guaranteed to find in a permutation of length $n$. The best lower bound, due to Bukh and Rudenko~\cite{bukh2020order}, is currently $\Omega(n^{3/5})$, while the best upper bound is $O(n^{{2}/{3}})$ obtained by applying a first moment calculation to a random permutation \cite{gawron2014izomorficzne}. It would be interesting to narrow this gap. Resolving whether the upper bound is tight, as conjectured by Gawron \cite{gawron2014izomorficzne}, would be of particular interest as this would determine whether there are permutations for which the maximum twin length is asymptotically smaller than that of a random permutation.

\begin{problem}[Gawron \cite{gawron2014izomorficzne}]
Does every permutation of length $n$ contain a twin of length $\Omega(n^{2/3})$?
\end{problem}

\paragraph{Acknowledgements}
We would like to thank the anonymous referees for their helpful comments.

\bibliographystyle{abbrev-bold}
\bibliography{twins}
\end{document}